\newcommand{\rvline}{\hspace*{-\arraycolsep}\vline\hspace*{-\arraycolsep}}
\begin{document}

\title{Multigrid Preconditioning for FD-DLM Method in Elliptic Interface Problems}
\titlerunning{Multigrid for FD-DLM}
\author{Najwa Alshehri\inst{1,2}
\orcidID{0000-0002-2953-5557} 
\and
Daniele Boffi\inst{1,3} 
% \Envelope
\orcidID{0000-0003-0717-2940} 
\and
Chayapol Chaoveeraprasit\inst{1,4}
\orcidID{0009-0009-4927-1649}
}
\authorrunning{N. Alshehri et al.}
% First names are abbreviated in the running head.
% If there are more than two authors, 'et al.' is used.
%
\institute{
King Abdullah University of Science and Technology, K.S.A. \and
Jubail Industrial College, K.S.A.\and
Dipartimento di Matematica ``F. Casorati'', University of Pavia, Italy \and
National University of Singapore, Singapore\\
\email{najwa.alshehri@kaust.edu.sa}\\ \email{daniele.boffi@kaust.edu.sa}\\
\email{chayapol@u.nus.edu}
}

\maketitle             
\begin{abstract}
We investigate the performance of multigrid preconditioners for solving linear systems arising from finite element discretizations of elliptic interface problems using the Fictitious Domain with Distributed Lagrange Multipliers (FD-DLM) formulation. 
Numerical experiments are conducted using continuous and discontinuous finite element spaces for the Lagrange multiplier.
Results indicate that multigrid is a promising preconditioner for problems in the FD-DLM formulation.

\keywords{Multigrid \and Finite elements \and Fictitious domain}
\end{abstract}

\section{Introduction}
Elliptic interface problems are a significant area of study across various scientific and engineering disciplines due to their broad applicability. 
One prominent example is fluid-structure interaction (FSI), where a fluid and a solid structure exhibit distinct physical properties on either side of the interface.

A newly developed method in \cite{boffi_finite_2015}, called FD-DLM, employs finite element discretization with unfitted meshes, coupled with the fictitious domain method and distributed Lagrange multipliers. 
This method eliminates the need for re-meshing in time-dependent problems. 
The application of FD-DLM to elliptic interface problems using continuous elements for the multiplier was studied in \cite{auricchio_fictitious_2015,boffi_mixed_2014} and for discontinuous elements in \cite{alshehri_unfitted_2024}. 
In both cases, the method leads to a saddle point problem with a $3 \times 3$ block matrix structure, often resulting in ill-conditioned systems that require effective preconditioners.

The behavior of the condition number of the system in the case of FD-DLM applied to FSI problem was studied in~\cite{conditioning}. To our knowledge, only \cite{boffi_parallel_2024} has explored preconditioning strategies for FD-DLM, focusing on direct inversion techniques. 
Multigrid is an effective approach for accelerating iterative solvers by reducing both low- and high-frequency errors, making it highly efficient in solving large-scale problems \cite{briggs_multigrid_2000}. 
When applied to saddle point problems, multigrid can keep iteration counts nearly independent of the mesh size \cite{schilders_preconditioning_2008}.

This article introduces a multigrid preconditioning approach for elliptic interface problems in the FD-DLM formulation, considering both continuous and discontinuous Lagrange multipliers. 
Section 2 formulates the problem and the finite element discretization. 
Section 3 presents the preconditioners, while the multigrid strategies are introduced in Section 4.
Numerical results evaluating the performance of the preconditioners are provided in Section 5, followed by discussions on the obtained results.

\section{Problem Formulation}
We follow the formulation in \cite{alshehri_unfitted_2024}, restated here.

Let $\Omega$ be a domain in $\mathbb{R}^d$, where $d \in \{2,3\}$, with a bounded Lipschitz boundary $\partial \Omega$.
Let $\Omega_1$ and $\Omega_2$ be two subdomains of $\Omega$ such that $\overline{\Omega} = \overline{\Omega}_1 \cup \overline{\Omega}_2$, and let the interface $\Gamma = \overline{\Omega}_1 \cap \overline{\Omega}_2$ be Lipschitz continuous.
Assume that $\overline{\Gamma} \cap \partial \Omega = \emptyset$.

For a domain $\omega \subset \mathbb{R}^d$, let $L^2(\omega)$, $H^1(\omega)$ denote the standard Lebesgue and Sobolev spaces on $\omega$, respectively, endowed with their natural norms, $\|\phi\|_{L^2}$ and $\|\phi\|_{H^1}$ . 
The dual space of $H^1(\omega)$ is denoted by $[H^{1}(\omega)]^*$.
The dual norm of a functional $F \in [H^{1}(\omega)]^*$ is defined as $\|F\|_{[H^{1}(\omega)]^*} = \sup_{\phi \in H^1(\omega) \setminus \{0\}} \frac{|F(\phi)|}{\|\phi\|_{H^1}}.$

\begin{problem} \label{prob:orig}
Given $f_1: \Omega_1 \to \mathbb{R}, f_2: \Omega_2 \to \mathbb{R}$, $\beta_1 \in L^\infty(\Omega_1)$, $\beta_2 \in L^\infty(\Omega_2)$, such that $\inf \beta_1 > 0$ and $\inf \beta_2 > 0$,
find $(u_1,u_2): \Omega_1 \times \Omega_2  \to \mathbb{R}$ such that:
\begin{align}
-\nabla \cdot (\beta_i \nabla u_i) &= f_i &&\text{in } \Omega_i,\ i = 1,2, \nonumber\\
u_1 &= u_2 &&\text{on } \Gamma, \\
\beta_1 \nabla(u_1) \cdot \mathbf{n}_1 &= -\beta_2 \nabla(u_2) \cdot \mathbf{n}_2 &&\text{on } \Gamma, \nonumber\\
u_1 &= 0 &&\text{on } \partial \Omega_1,\nonumber
\end{align}
where $\mathbf{n}_i$ ($i = 1,2$) is the unit vector pointing out of $\Omega_i$ and normal to $\Gamma$.
\end{problem}

The FD-DLM method in \cite{auricchio_fictitious_2015} extends $\Omega_1$ to $\Omega$, and extends $u_1$, $f_1$, $\beta_1$ to $u, f$ and $\beta$ in $\Omega$, respectively, so that $u|_{\Omega_1} = u_1$, $f|_{\Omega_1} = f_1$ and $\beta|_{\Omega_1} = \beta_1$.
To ensure that the extended solution $u$ is equal to $u_2$ when restricted to $\Omega_2$, a Lagrange multiplier term is added. 
The weak variational formulation of Problem \ref{prob:orig} reads as follows.

\begin{problem} \label{prob:vari}
Given $f \in L^2(\Omega)$, $f_2 \in L^2(\Omega_2)$, $\beta \in L^\infty(\Omega)$ and $\beta_2 \in L^\infty(\Omega_2)$ with $f|_{\Omega_1} = f_1$ and $\beta|_{\Omega_1} = \beta_1$, find $(u,u_2,\lambda) \in V \times V_2 \times \Lambda$ such that:
\begin{align}
(\beta \nabla u, \nabla v)_\Omega + c(\lambda, v|_{\Omega_2}) &= (f,v)_\Omega &&\forall v \in V, \nonumber\\
((\beta_2 - \beta)\nabla u_2, \nabla v_2)_{\Omega_2} - c(\lambda, v_2) &= (f_2 - f, v_2)_{\Omega_2} &&\forall v_2 \in V_2, \\
c(\mu, u|_{\Omega_2} - u_2) &= 0 &&\forall \mu \in \Lambda, \nonumber
\end{align}
where $V = H_0^1(\Omega) = \{v \in H^1(\Omega) : v = 0 \text{ on } \partial\Omega\}$, $V_2 = H^1(\Omega_2)$, and $c: \Lambda \times V_2 \to \mathbb{R}$ depends on the choice of $\Lambda$.
\end{problem}

There are two natural choices for the space $\Lambda$ of Lagrange multipliers.
The first choice, $\Lambda_1 = [H^{1}(\Omega_2)]^*$, with $c(\lambda,v_2) = \langle \lambda, v_2 \rangle$, the duality pairing, allows for both continuous and discontinuous discretizations for $\Lambda$, while the second choice, $\Lambda_2 = H^1(\Omega_2)$, with $c(\lambda,v_2) = (\lambda, v_2)_{H^1}$, is applicable only with continuous discretizations, since the discontinuous finite elements do not lie in $H^1$.

Problem \ref{prob:vari} can then be written in operator form as

\begin{equation} \label{eq:opform}
\begin{pmatrix}
A_1 & 0 &\rvline& C_1^T \\
0 & A_2 &\rvline& -C_2^T \\
\hline
C_1 & -C_2&\rvline & 0
\end{pmatrix}
\begin{pmatrix}
u \\
u_2 \\
\hline
\lambda
\end{pmatrix}
=
\begin{pmatrix}
F_1 \\
F_2 \\
\hline
0
\end{pmatrix},
\end{equation}
where $A_1$ and $A_2$ are the operators associated with the bilinear forms $(\beta \nabla u, \nabla v)_\Omega$ and $((\beta_2 - \beta) \nabla u_2, \nabla v_2)_{\Omega_2}$, respectively, and $(C_1, -C_2)$ is the operator pair associated with $c(\mu, u|_{\Omega_2} - u_2)$. 
The well-posedness of this problem was established in \cite{auricchio_fictitious_2015}. 

Problem \ref{prob:vari} is discretized independently on $\Omega$ and $\Omega_2$ using the finite element method. 
Let $V_h \subset V$, $V_{2,h} \subset V_2$, and $\Lambda_h \subset \Lambda$ be finite-dimensional subspaces spanned by piecewise polynomial basis functions.
The discrete problems can then be stated as follows:

\begin{problem} \label{prob:discrete} 
Given $f\in L^2(\Omega)$ and $f_2\in L^2(\Omega_2)$, find $(u_h,u_{2,h},\lambda_h)\in V_h\times V_{2,h}\times\Lambda_h$ such that
\begin{align}
(\beta\nabla u_h,\nabla v_h)_\Omega+c(\lambda_h,v_h|_{\Omega_2})&=(f,v_h)_\Omega &&\forall\nu_h\in V_h, \nonumber\\
((\beta_2-\beta)\nabla u_{2,h},\nabla\nu_{2,h})_{\Omega_2}-c(\lambda_h,\nu_{2,h})&=(f_2-f,\nu_{2,h})_{\Omega_2} &&\forall\nu_{2,h}\in V_{2,h}, \\
c(\mu_h,u_h|_{\Omega_2}-u_{2,h})&=0 &&\forall\mu_h\in\Lambda_h,  \nonumber  
\end{align}
where $c(\cdot,\cdot)$ is the $L^2$ scalar product when $\Lambda =\Lambda_1$ and the $H^1$ scalar product when $\Lambda = \Lambda_2$.
\end{problem}

We consider quadrilateral meshes. Let $Q_1$ denote the space of continuous piecewise bilinear polynomials, and let $B$ denote the space of bubble functions that vanish on the boundary of each element.
The bubble functions are biquadratic polynomials in the case of a quadrilateral mesh in a two-dimensional domain.
Let $P_0$ denote the space of piecewise constant polynomials. 
In this work, we consider two choices of finite elements discretizations.

Element 1 $[Q_1 - Q_1 - Q_1]$:
The continuous piecewise linear space is used to discretize all of $V$, $V_2$, and $\Lambda$.
The stability of the $P_1 - P_1 - P_1$ element was studied in \cite{auricchio_fictitious_2015} and was extended by \cite{boffi_mixed_2014} to the case of quadrilateral meshes.

Element 2 $[Q_1 - (Q_1+B) - P_0]$:
The elements $Q_1$, $Q_1+B$, and $P_0$ are used to discretize $V$, $V_2$, and $\Lambda$, respectively.
Note the addition of the bubble function for $V_2$.
The a priori error analysis of this element was first studied in \cite{alshehri_unfitted_2024} where it was also observed that the bubble is necessary for the stability of the scheme, while the a posteriori analysis was conducted in \cite{alshehri_posteriori_2024}.
Since the discontinuous element $P_0$ is used, this element will be applied only to the case of $\Lambda_1 = [H^1(\Omega_2)]^*$.

In its matrix form, Problem \ref{prob:discrete} becomes a linear system of the same form as \eqref{eq:opform}, which we will denote as $A\vec{x} = \vec{b}$. 
It was proved in \cite{alshehri_unfitted_2024,auricchio_fictitious_2015,boffi_mixed_2014} that the discrete problem remains well-posed under the condition $\beta_2 > \beta$, provided that the finite element spaces $V$, $V_2$, and $\Lambda$ are discretized using either of the above two elements.
Some numerical results suggest that the condition $\beta_2 > \beta$ could be relaxed.

\section{Preconditioners}
Solving the linear system $A\vec{x} = \vec{b}$ can be computationally expensive, especially as the mesh is refined and the problem size grows.
Preconditioning is a widely used technique to accelerate convergence and improve the numerical stability of iterative solvers.
The original system is transformed into an equivalent form, $P^{-1} A \vec{x} = P^{-1} \vec{b}$, where $P$ is the preconditioner.
The preconditioner $P$ should approximate the original matrix $A$ while being easier to invert or apply, and should reduce the condition number of the system significantly, leading to faster convergence.\\
Let
\[
B = 
\begin{pmatrix}
A_2 &\rvline& -C_2^T \\
\hline
-C_2 &\rvline& 0
\end{pmatrix}.
\]
The three preconditioners under consideration are defined as follows:
\begin{align*}
P_1 &= 
\begin{pmatrix}
A_1 &\rvline& C_1^T \\
\hline
0 &\rvline& B
\end{pmatrix}, &&
P_2 = 
\begin{pmatrix}
A_1 &\rvline& 0 \\
\hline
C_1 &\rvline& B
\end{pmatrix}, &&
P_3 = 
\begin{pmatrix}
A_1 &\rvline& 0 \\
\hline
0 &\rvline& B
\end{pmatrix}.
\end{align*}

The inverse of these preconditioners is applied to reduce the condition number of the entire system and speed up the solving process.
To ensure the invertibility of the proposed preconditioners, and given that the system has a triangular or diagonal matrix structure, we must analyze the invertibility of the diagonal blocks, $A_1$ and $B$. 

The block $A_1$ mimics a standard elliptic problem with Dirichlet boundary conditions and is therefore invertible. The block $B$, on the other hand, does not involve any boundary conditions applied to the immersed domain.
To establish its invertibility, we note that $B$ is in a saddle-point format.
According to the theory of saddle-point problems \cite{boffi_mixed_2013}
the invertibility of $B$ requires verifying two conditions: 
1. $A_2$ is elliptic in the kernel of $C_2$.
2. $C_2$ is full row rank.
Note that $C_2$ vary depending on the choice of $\Lambda$.

From now on, let $C$ denote a generic positive constant that may change from one step to another.
\begin{proposition}\label{prop:elk2}
For $\beta_2 > \beta > 0$, there exists a constant $C > 0$ such that, for all $u_{2,h}$ in the kernel of $C_2$, the following inequality holds:
\[
((\beta_2-\beta)\nabla u_{2,h},\nabla u_{2,h})_{\Omega_2} \geq C \|u_{2,h}\|^2_{1,\Omega_2}, \quad \forall u_{2,h} \in V_{2,h}.
\]
\end{proposition}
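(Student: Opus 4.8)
The plan is to reduce the claimed coercivity estimate to a Poincaré-type inequality on $\Omega_2$. The key observation is that the bilinear form $((\beta_2-\beta)\nabla u_{2,h}, \nabla u_{2,h})_{\Omega_2}$ controls only the $H^1$-seminorm of $u_{2,h}$, not the full $H^1$-norm, so the real content of the proposition is that on the kernel of $C_2$ the seminorm is equivalent to the full norm. Since $\beta_2 - \beta \geq \inf(\beta_2 - \beta) =: \beta_0 > 0$ (using $\beta_2 > \beta$ and the $L^\infty$ bounds, so that $\beta_0$ is a genuine positive constant), we immediately get
\[
((\beta_2-\beta)\nabla u_{2,h}, \nabla u_{2,h})_{\Omega_2} \geq \beta_0 \, |u_{2,h}|^2_{1,\Omega_2}.
\]
It then suffices to prove a Poincaré inequality of the form $|u_{2,h}|^2_{1,\Omega_2} \geq C \|u_{2,h}\|^2_{1,\Omega_2}$ valid for all $u_{2,h}$ in the kernel of $C_2$, and to combine the two with a new generic constant $C$.

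To establish that Poincaré inequality, I would first recall the structure of the kernel of $C_2$. For $\Lambda = \Lambda_1 = [H^1(\Omega_2)]^*$ with $c(\mu, v_2) = \langle \mu, v_2\rangle$ realized via the $L^2(\Omega_2)$ pairing on the discrete level, the constraint $c(\mu_h, u_{2,h}) = 0$ for all $\mu_h \in \Lambda_h$ forces $u_{2,h}$ to be $L^2$-orthogonal to $\Lambda_h$; for $\Lambda = \Lambda_2 = H^1(\Omega_2)$ with the $H^1$ scalar product, $u_{2,h}$ is $H^1$-orthogonal to $\Lambda_h$. In either case, because $\Lambda_h$ contains at least the constants (both $Q_1$ and $P_0$ contain $\mathbb{P}_0$), the kernel condition in particular implies a vanishing-mean condition: either $\int_{\Omega_2} u_{2,h}\,dx = 0$ (for $P_0$ and for $Q_1$ with the $L^2$ pairing) or $(u_{2,h}, 1)_{H^1(\Omega_2)} = \int_{\Omega_2} u_{2,h}\,dx = 0$ (for $Q_1$ with the $H^1$ pairing, since $\nabla 1 = 0$). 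On the subspace of $H^1(\Omega_2)$ with vanishing mean, the Poincaré–Wirtinger inequality gives $\|u_{2,h}\|_{L^2(\Omega_2)} \leq C_P |u_{2,h}|_{1,\Omega_2}$, hence $\|u_{2,h}\|^2_{1,\Omega_2} = \|u_{2,h}\|^2_{L^2} + |u_{2,h}|^2_{1,\Omega_2} \leq (1+C_P^2)\,|u_{2,h}|^2_{1,\Omega_2}$, which is exactly what is needed. Chaining this with the first display and absorbing constants yields the claim with $C = \beta_0/(1+C_P^2)$.

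The main subtlety — and the step I would be most careful about — is verifying that the kernel of $C_2$ indeed enforces a vanishing-mean (or otherwise Poincaré-compatible) condition for each admissible discretization, i.e. that $\Lambda_h$ is rich enough. This is where the specific finite element choices enter: one must check that the constant function lies in (or is representable by) $\Lambda_h$ for Element 1 ($\Lambda_h = Q_1$) and Element 2 ($\Lambda_h = P_0$), and confirm that the discrete pairing $c(\cdot,\cdot)$ restricted to constants reduces to the integral $\int_{\Omega_2} u_{2,h}$. Both hold by inspection, but it is the crux: without some such condition the seminorm bound is false (constants are a nontrivial kernel of the seminorm). A secondary point worth a remark is that the constant $C$ is mesh-independent, which follows because the Poincaré–Wirtinger constant $C_P$ depends only on $\Omega_2$ and not on $h$; this is important for the preconditioning application even though the proposition as stated does not claim it explicitly.
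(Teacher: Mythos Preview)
Your proof is correct and follows the same route as the paper: bound the bilinear form below by the $H^1$-seminorm using $\beta_2>\beta$, then upgrade to the full $H^1$-norm on $\ker C_2$ via a Poincar\'e-type inequality. The only (minor) difference is in the characterization of $\ker C_2$ for Element~1: the paper observes that since $\Lambda_h=V_{2,h}=Q_1$ one may test with $\mu_h=u_{2,h}$ and conclude $\ker C_2=\{0\}$, making the inequality vacuous there, whereas you use only the weaker vanishing-mean condition (testing with $\mu_h=1$) uniformly for both elements. Your unified argument is perfectly sufficient and arguably cleaner; nothing is missing.
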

\begin{proof}
We observe that:
\[
((\beta_2-\beta)\nabla u_{2,h},\nabla u_{2,h})_{\Omega_2} \geq C\; |u_{2,h}|^2_{1,\Omega_2}.
\]
In Element 1, where $\lambda$ is a piecewise bilinear polynomial,  $u_{2,h}$ in the kernel of $C_2$ means that the kernel contains only the trivial solution since $u_{2,h}$ and $\mu_h$ belong to the same space. On the other hand, in Element 2, where $\lambda$ is a piecewise constant spanned by $\{1\}$, $u_{2,h}$ is in the kernel of $C_2$ means that it has a zero mean value by the definition of elements in that kernel. 
Using the Poincaré inequality in both scenarios, we deduce:
\[
((\beta_2-\beta)\nabla u_{2,h},\nabla u_{2,h})_{\Omega_2} \geq C\; |u_{2,h}|^2_{1,\Omega_2} \geq C \;\|u_{2,h}\|^2_{1,\Omega_2}.
\]
This ensures that $A_2$ is elliptic in the kernel of $C_2$.
\qed
\end{proof}

\begin{proposition} \label{prop:inf-sup}
There exists a constant $C > 0$ such that, for all $(u_h, u_{2,h})$ in the kernel of $C_2$, the following inequality holds:
\[
\sup_{ u_{2,h} \in V_{2,h}} \frac{\left\langle \mu, -u_{2,h} \right\rangle}{ \|u_{2,h}\|_{1,\Omega_2}} \geq C \|\mu\|_{\Lambda},
\]
\end{proposition}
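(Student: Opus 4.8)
The plan is to read the statement as the inf-sup (LBB) condition that makes $C_2$ surjective, and to prove it for every $\mu_h\in\Lambda_h$ by a Fortin argument, treating the two admissible choices of $\Lambda$ separately. The unifying reduction is the following: it suffices to exhibit, for each choice, an operator $\Pi_h\colon H^1(\Omega_2)\to V_{2,h}$ that is bounded in the $H^1(\Omega_2)$-norm uniformly in $h$ and that reproduces the action of the multipliers, i.e.\ $c(\mu_h,\Pi_h v-v)=0$ for all $\mu_h\in\Lambda_h$ and all $v\in H^1(\Omega_2)$. Indeed, given $\mu_h$, let $v^\ast\in H^1(\Omega_2)$ realise the supremum defining $\|\mu_h\|_\Lambda$ (this supremum is attained by the Riesz representative); testing the left-hand side with $u_{2,h}=-\Pi_h v^\ast$ and using $c(\mu_h,\Pi_h v^\ast)=c(\mu_h,v^\ast)$ together with $\|\Pi_h v^\ast\|_{1,\Omega_2}\le C\|v^\ast\|_{1,\Omega_2}$ yields the bound with constant $1/C$. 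The work is therefore entirely in constructing $\Pi_h$.

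For Element 1 with $\Lambda=\Lambda_2=H^1(\Omega_2)$ nothing needs to be built: since $\Lambda_h=V_{2,h}$ and $c(\cdot,\cdot)=(\cdot,\cdot)_{H^1}$, the choice $u_{2,h}=-\mu_h$ already gives ratio $\|\mu_h\|_{1,\Omega_2}=\|\mu_h\|_\Lambda$, so the constant is $1$. For $\Lambda=\Lambda_1=[H^1(\Omega_2)]^\ast$ the duality pairing coincides with the $L^2(\Omega_2)$ inner product on the finite-dimensional multiplier space, so the reproduction property required of $\Pi_h$ becomes an $L^2$ condition. In Element 1, where $\Lambda_h=V_{2,h}=Q_1$ on the fitted mesh of $\Omega_2$, the condition $(\mu_h,\Pi_h v-v)_{0,\Omega_2}=0$ for all $\mu_h\in V_{2,h}$ identifies $\Pi_h$ with the $L^2(\Omega_2)$-projection onto $V_{2,h}$, and I would invoke the classical uniform $H^1$-stability of this projection on quasi-uniform meshes. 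In Element 2, where $\Lambda_h=P_0$, the reproduction property reduces to preserving the elementwise mean of $v$; I would take $\Pi_h=I_h+(\text{bubble correction})$, with $I_h$ a Scott--Zhang (or Cl\'ement) interpolant into $Q_1$ and the correction a sum of interior bubbles $c_K b_K$ fixed by $\int_K \Pi_h v=\int_K v$, and then bound $\sum_K |c_K|\,|b_K|_{1,K}\lesssim |v|_{1,\Omega_2}$ by the usual elementwise inverse/scaling estimate. This is exactly the mechanism for which the bubble enrichment of $V_{2,h}$ was introduced in \cite{alshehri_unfitted_2024}.

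The step I expect to be the main obstacle is the uniform-in-$h$ $H^1$-stability of the correcting operator in the two $\Lambda_1$ cases: the $H^1$-stability of the $L^2$-projection onto $Q_1$ for Element 1 (this is where quasi-uniformity of the mesh on $\Omega_2$ enters), and the scaling estimate for the bubble correction for Element 2. Neither argument interacts with the fictitious-domain construction, since $V_{2,h}$ and $\Lambda_h$ live on a genuine shape-regular mesh of $\Omega_2$; hence the inf-sup constant depends only on that mesh and not on how $\Gamma$ cuts the background grid, and the resulting stability coincides with the one already established in \cite{auricchio_fictitious_2015,boffi_mixed_2014,alshehri_unfitted_2024}. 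Together with Proposition \ref{prop:elk2}, this supplies the two hypotheses of the saddle-point theory and hence the invertibility of $B$, and therefore of $P_1$, $P_2$, and $P_3$.
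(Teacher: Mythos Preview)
Your proposal is correct and substantially more detailed than what the paper itself supplies. The paper does not prove Proposition~\ref{prop:inf-sup} at all: immediately after the statement it simply records that ``this inf-sup condition was proven for Element~1 in \cite{auricchio_fictitious_2015} and for Element~2 in \cite{alshehri_unfitted_2024}'' and moves on. Your Fortin-operator argument---the identity map for $\Lambda_2$, the $L^2$-projection for Element~1 with $\Lambda_1$, and the Scott--Zhang plus bubble mean-correction for Element~2---is precisely the kind of construction those references contain, so in effect you have reconstructed the proofs the paper outsources. The one place to be slightly careful is your claim that the supremum defining $\|\mu_h\|_{[H^1(\Omega_2)]^\ast}$ is attained by the Riesz representative in $H^1(\Omega_2)$: this is true, but that representative lives in the continuous space, not in $V_{2,h}$, which is exactly why the Fortin operator is needed; you handle this correctly afterwards, so the argument goes through.
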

where the norm $\|\mu\|_{\Lambda}$ is given by either $\|\mu\|_{1,\Omega_2}$ or $ \|\mu\|_{[H^1(\Omega_2)]^*}$ depending on the choice of $\Lambda$.

This inf-sup condition was proven for Element 1 in \cite{auricchio_fictitious_2015} and for Element 2 in \cite{alshehri_unfitted_2024}.
Putting all results together, we obtain that $A_1$ and $B$ are invertible for both choices of finite element spaces and the inverse of the proposed preconditioners $P_1, P_2,$ and $P_3$ 	is given by:
\begin{align*}
P_1^{-1} &= 
\begin{pmatrix}
A_1^{-1}&\rvline & -A_1C_1^TB^{-1} \\
\hline
0&\rvline & B^{-1}
\end{pmatrix}, &&
P_2^{-1} = 
\begin{pmatrix}
A_1^{-1}&\rvline & 0 \\
\hline
-B^{-1}C_1A_1^{-1}&\rvline & B^{-1}
\end{pmatrix}, &&
P_3^{-1} = 
\begin{pmatrix}
A_1^{-1}&\rvline & 0 \\
\hline
0&\rvline & B^{-1}
\end{pmatrix}.
\end{align*}

\section{Multigrid Approximation}
In \cite{boffi_parallel_2024}, $A_1^{-1}$ and $B^{-1}$ 	were computed using direct solvers.
In contrast, we employ a multigrid approximation of $A_1^{-1}$ and $B^{-1}$.
Specifically, let $\widetilde{A_1^{-1}}$ denote a matrix such that for any vector $\vec{b}$, $\widetilde{A_1^{-1}}\vec{b}$ is equal to the solution $\vec{x}$ of $A_1\vec{x} = \vec{b}$ obtained by using a single multigrid $V$-cycle pass.
We used Symmetric Over-Relaxation (SOR) as the pre- and post-smoothers for the multigrid cycle applied to $A_1$.

It is natural to also apply multigrid to obtain $\widetilde{B^{-1}}$.
However, since the matrix $B$ has zeros on the diagonal, the usual smoothers such as SOR or Jacobi could not work, as they all require inversion of the diagonal.

After extensive research, we identified Vanka preconditioners \cite{vanka_block-implicit_1986} as a suitable alternative.
Vanka preconditioners were specifically designed for preconditioning saddle point problems or problems involving Lagrange multipliers, where the matrix often has a zero block on the diagonal.
The Vanka preconditioning process operates as follows: for each degree of freedom $i$ associated with the Lagrange multiplier, a set $D_i$ is constructed.
This set includes $i$ itself and all degrees of freedom $j$ in $\Omega_2$ such that the coupling entry between $i$ and $j$ is nonzero.
The entries corresponding to the coupling between any two degrees of freedom in $D_i$ are then extracted from $B$ to form a dense local matrix $B_i$.
The inverse $B_i^{-1}$ 	is then computed directly and applied to update the current solution vector $\vec{x}$.
A single pass of Vanka preconditioning 	involves executing this procedure for each $i$ associated with the Lagrange multiplier.

In the next section we conduct numerical experiments to evaluate the performance of multigrid preconditioners and to compare their effectiveness with preconditioners utilizing direct inversion techniques.
We implement preconditioners and solvers using deal.II \cite{africa_dealii_2024}.
The library has the built-in class \texttt{SparseVanka} for a Vanka smoother.
To enable its use as a multigrid preconditioner, we implemented the additional methods \texttt{Tvmult()} and \texttt{clear()} in this class.

For each preconditioner $P_i$, four different approaches are used, depending on whether a direct or multigrid preconditioner was used for $A_1^{-1}$ and $B^{-1}$.
For example, the label ``dm'' indicates the use of a direct solver for $A_1^{-1}$ and a multigrid preconditioner for $B^{-1}$.

\section{Numerical Results}
We concuct a series of numerical experiments under controlled conditions to assess the effectiveness of the proposed preconditioners.
The parameters and solver settings are carefully chosen to ensure a fair comparison.
We considered a two-dimensional problem with background domain $\Omega = [-1.4, 1.4]^2$ and immersed domain $\Omega_2 = B(0,1)$.
The coefficients are set to $ \beta = 1 $ and $ \beta_2 = 10 $ and the source terms to $ f =  f_2 = 1 $.

Each multigrid V-cycle uses two pre- and post-smoothing passes.  
However, if convergence is not achieved within the first cycle --- specifically for the case ``dm'' and ``mm'' under Element 2 discretization across all preconditioners $ P_1, P_2, $ and $ P_3 $ --- five smoothing passes are applied instead.
The solver terminates when the residual satisfies $ r < 10^{-12} $ or the iteration count exceeds $ 10^5 $.  
All computations are performed on a personal laptop without parallelization.

Figures \ref{fig:cn_h} and \ref{fig:it_t} each consist of nine subfigures, displaying the condition number versus mesh size, $ h $, and the iteration count versus solving time, respectively, on a log-log scale for the nine cases under examination.  
The rows correspond to different preconditioners, while the columns represent the finite element discretization cases.  

% Figure 1
\begin{figure}[ht] 
    \centering
    \begin{minipage}{0.32\textwidth}\centering
        \small \quad \quad Element:1 , $\Lambda =\Lambda_2$
    \end{minipage}
    \begin{minipage}{0.32\textwidth}\centering
        \small \quad \quad Element:1 , $\Lambda =\Lambda_1$
    \end{minipage}
    \begin{minipage}{0.32\textwidth}\centering
        \small \quad \quad Element:2 , $\Lambda =\Lambda_1$
    \end{minipage}

    \begin{minipage}{0.02\textwidth}\centering
        \small $P_1$
    \end{minipage}    
    \begin{minipage}{0.30\textwidth}
        \centering
        \includegraphics[width=\textwidth]{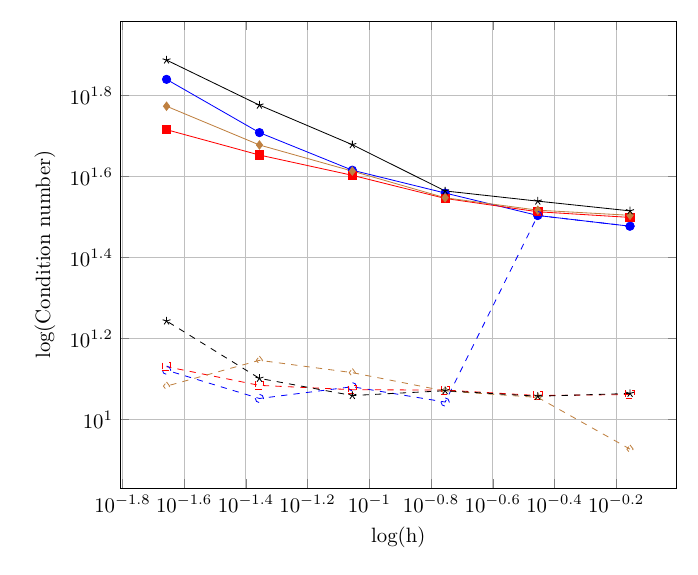}
    \end{minipage}
    \hfill
    \begin{minipage}{0.30\textwidth}
        \centering
        \includegraphics[width=\textwidth]{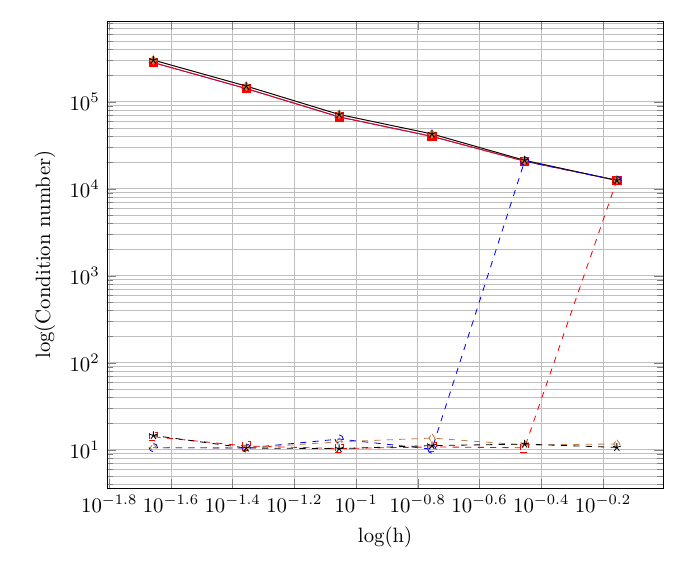}
    \end{minipage}
    \hfill
    \begin{minipage}{0.30\textwidth}
        \centering
        \includegraphics[width=\textwidth]{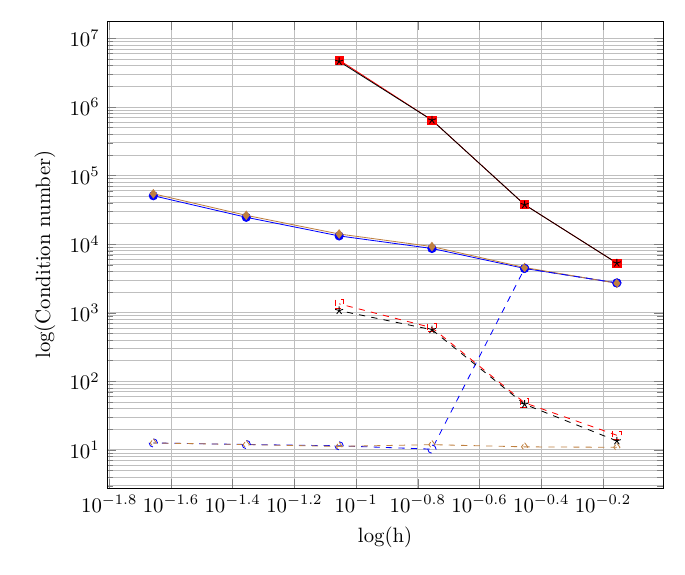}
    \end{minipage}

    \begin{minipage}{0.02\textwidth}\centering
        \small $P_2$
    \end{minipage}
    \begin{minipage}{0.30\textwidth}
        \centering
        \includegraphics[width=\textwidth]{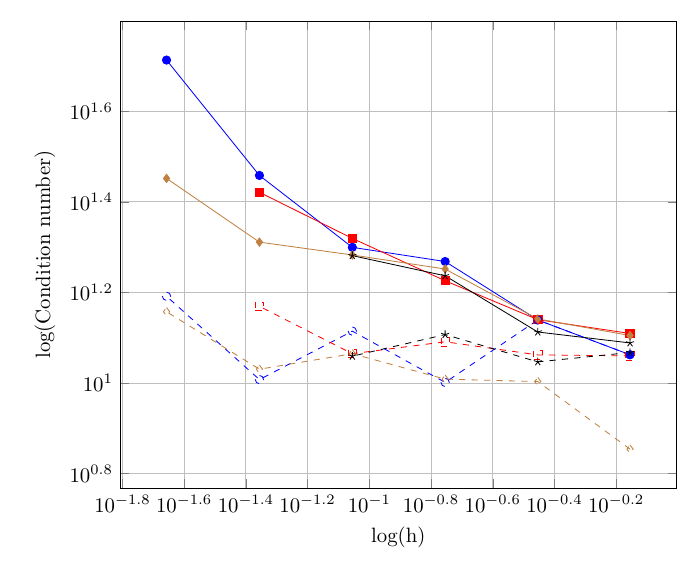}
    \end{minipage}
    \hfill
    \begin{minipage}{0.30\textwidth}
        \centering
        \includegraphics[width=\textwidth]{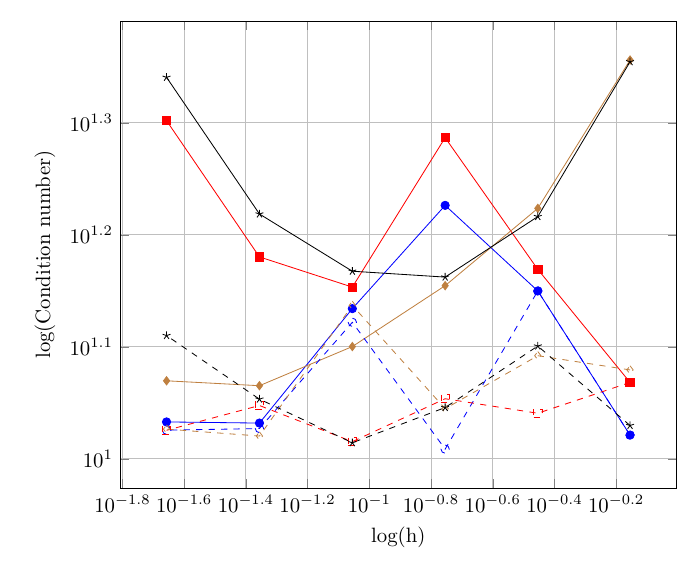}
    \end{minipage}
    \hfill
    \begin{minipage}{0.30\textwidth}
        \centering
        \includegraphics[width=\textwidth]{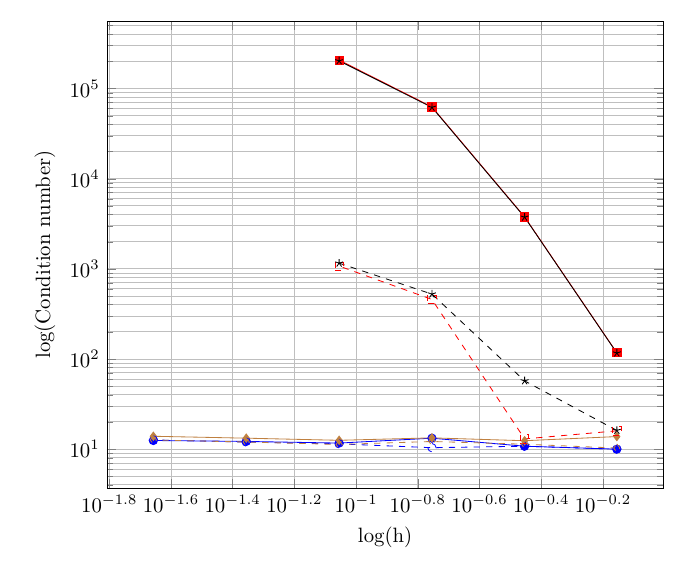}
    \end{minipage}
    
    \begin{minipage}{0.02\textwidth}\centering
        \small $P_3$
    \end{minipage}
    \begin{minipage}{0.30\textwidth}
        \centering
        \includegraphics[width=\textwidth]{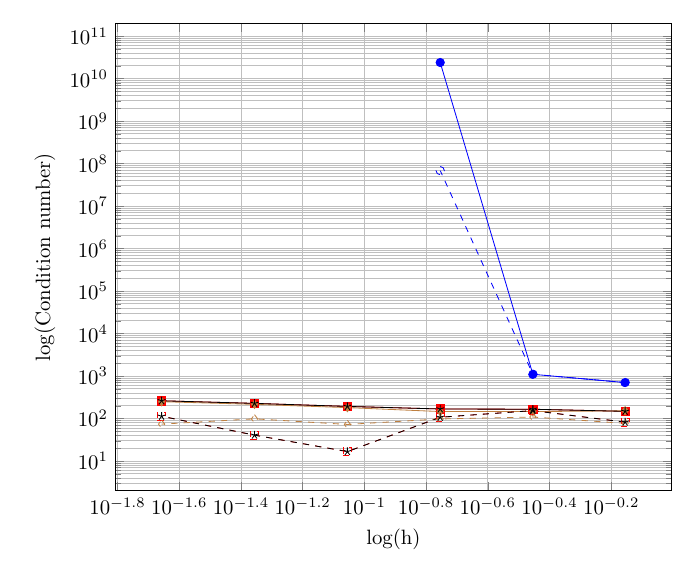}
    \end{minipage}
    \hfill
    \begin{minipage}{0.30\textwidth}
        \centering
        \includegraphics[width=\textwidth]{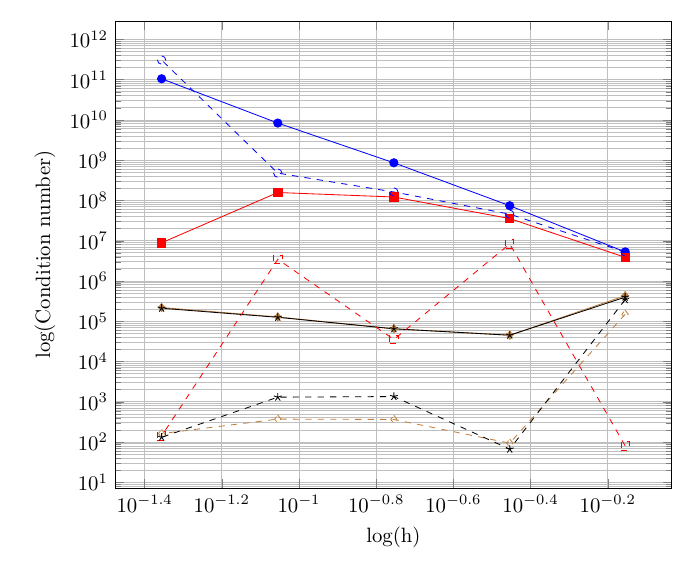}
    \end{minipage}
    \hfill
    \begin{minipage}{0.30\textwidth}
        \centering
        \includegraphics[width=\textwidth]{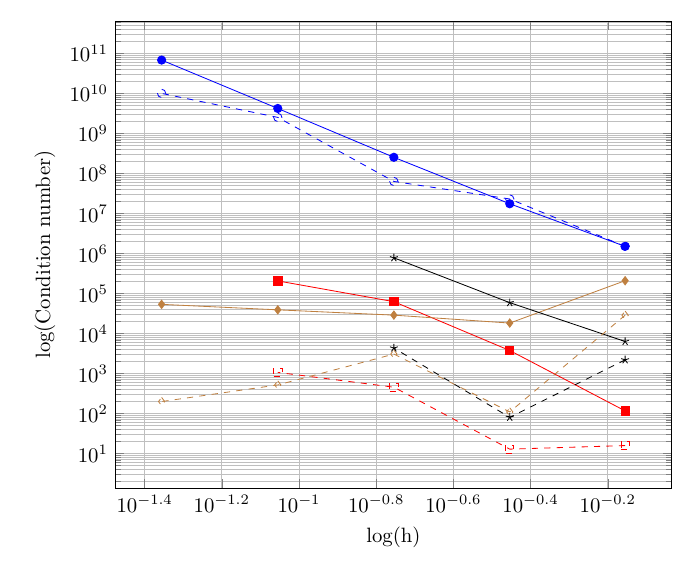}
    \end{minipage}
    
    \begin{minipage}{0.42\textwidth}\centering
        \includegraphics[width=\textwidth]{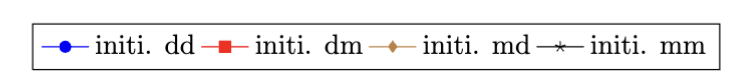}
    \end{minipage}
    \begin{minipage}{0.42\textwidth}\centering
        \includegraphics[width=\textwidth]{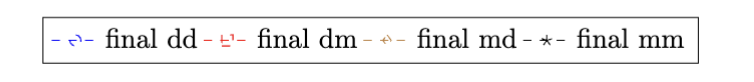}
    \end{minipage}
    \caption{Initial and final condition number of the full system matrix $ A $}
    \label{fig:cn_h}
\end{figure}

% Figure 2
\begin{figure}[ht]
    \centering
    \begin{minipage}{0.32\textwidth}\centering
        \small \quad \quad Element:1 , $\Lambda =\Lambda_2$
    \end{minipage}
        \begin{minipage}{0.32\textwidth}\centering
        \small \quad \quad Element:1 , $\Lambda =\Lambda_1$
    \end{minipage}
        \begin{minipage}{0.32\textwidth}\centering
        \small \quad \quad Element:2 , $\Lambda =\Lambda_1$
    \end{minipage}

    \begin{minipage}{0.02\textwidth}\centering
        \small $P_1$
    \end{minipage}    
    \begin{minipage}{0.30\textwidth}
        \centering
        \includegraphics[width=\textwidth]{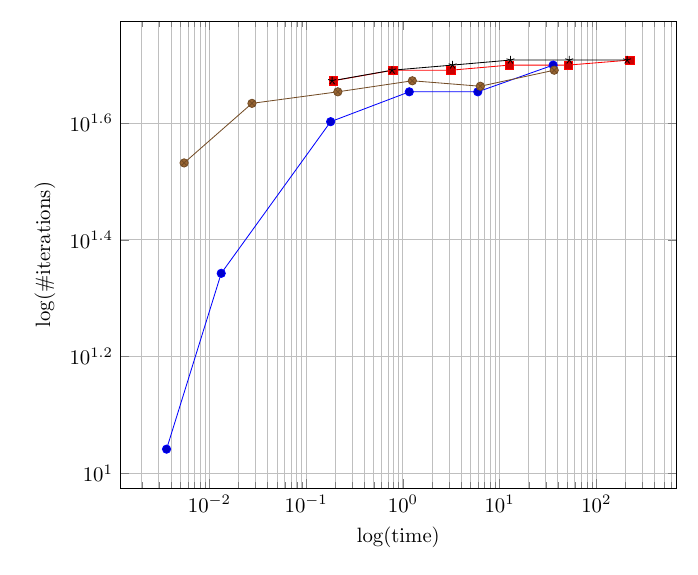}
    \end{minipage}
    \hfill
    \begin{minipage}{0.30\textwidth}
        \centering
        \includegraphics[width=\textwidth]{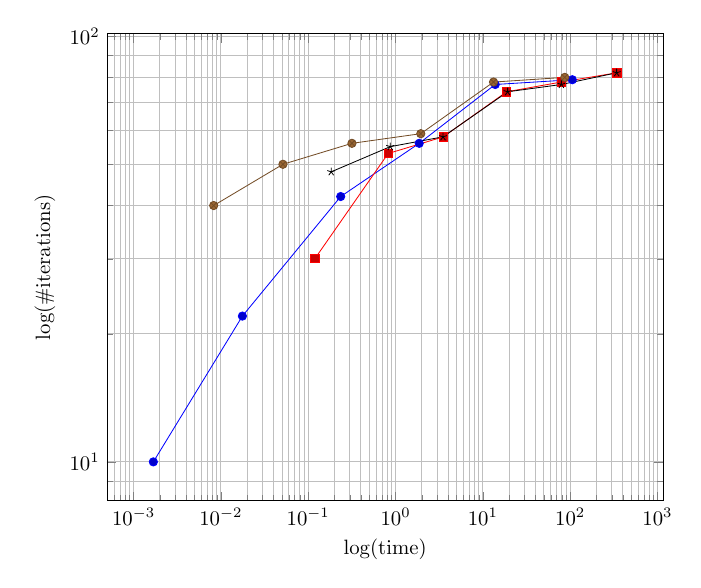}
    \end{minipage}
    \hfill
    \begin{minipage}{0.30\textwidth}
        \centering
        \includegraphics[width=\textwidth]{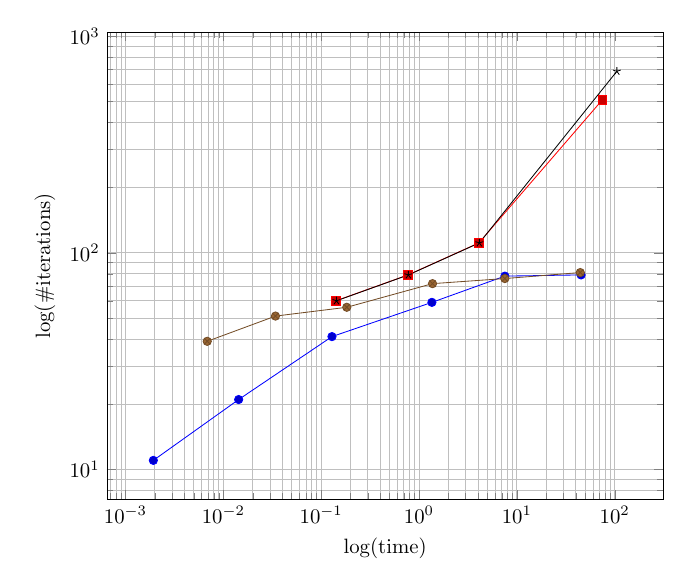}
    \end{minipage}
    
    \begin{minipage}{0.02\textwidth}\centering
        \small $P_2$
    \end{minipage}
    \begin{minipage}{0.30\textwidth}
        \centering
        \includegraphics[width=\textwidth]{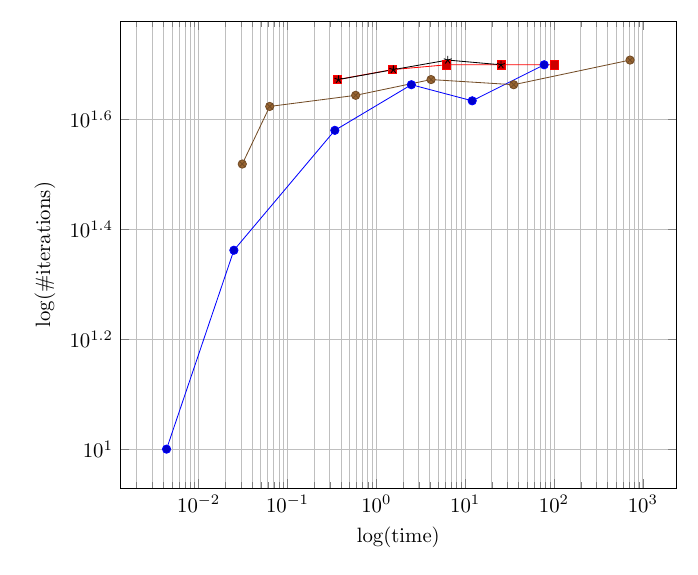}
    \end{minipage}
    \hfill
    \begin{minipage}{0.30\textwidth}
        \centering
        \includegraphics[width=\textwidth]{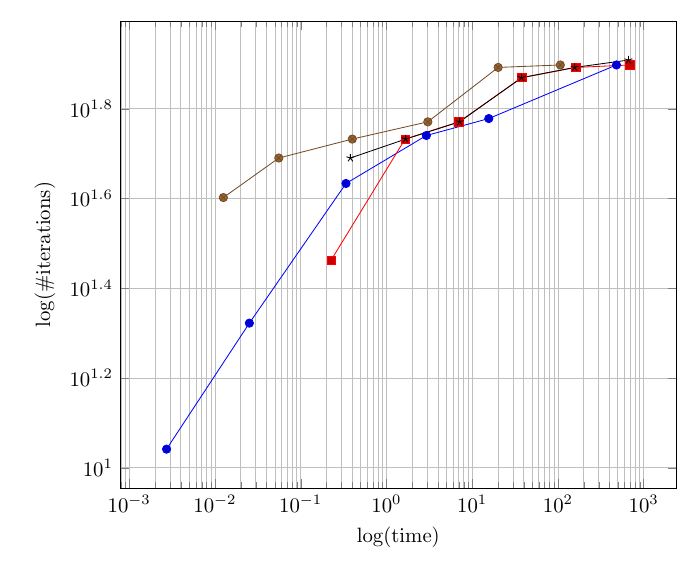}
    \end{minipage}
    \hfill
    \begin{minipage}{0.30\textwidth}
        \centering
        \includegraphics[width=\textwidth]{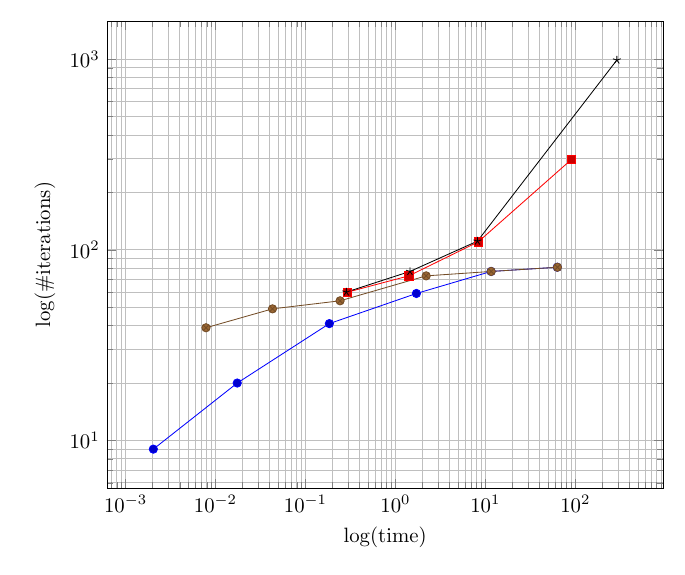}
    \end{minipage}
    
    \begin{minipage}{0.02\textwidth}\centering
        \small $P_3$
    \end{minipage}
    \begin{minipage}{0.30\textwidth}
        \centering
        \includegraphics[width=\textwidth]{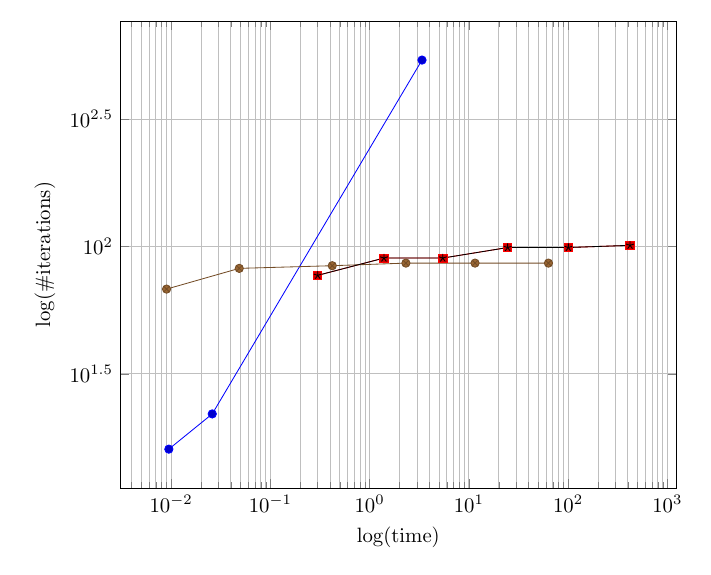}
    \end{minipage}
    \hfill
    \begin{minipage}{0.30\textwidth}
        \centering
        \includegraphics[width=\textwidth]{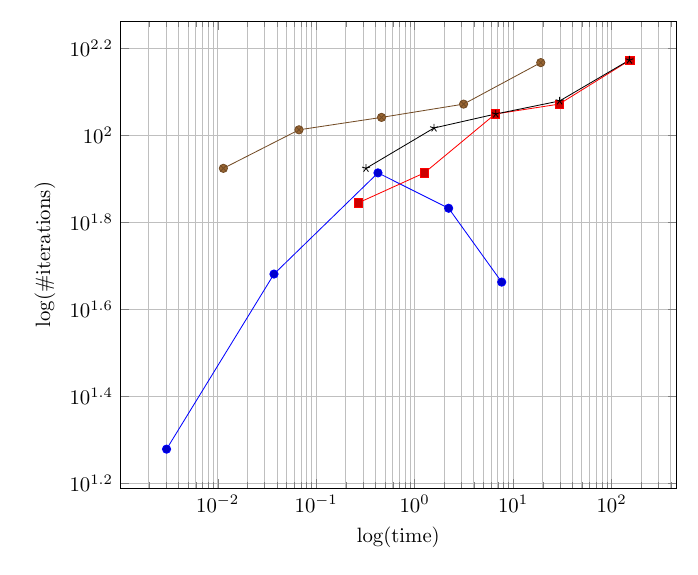}
    \end{minipage}
    \hfill
    \begin{minipage}{0.30\textwidth}
        \centering
        \includegraphics[width=\textwidth]{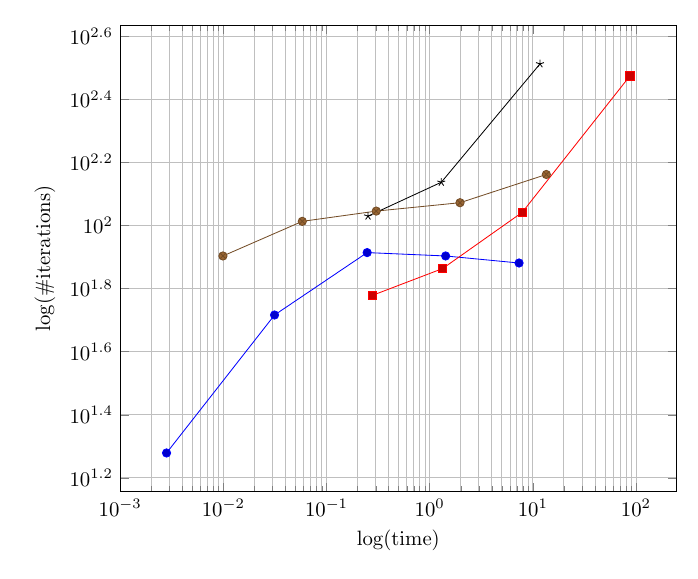}
    \end{minipage}

    \begin{minipage}{0.30\textwidth}\centering
        \includegraphics[width=\textwidth]{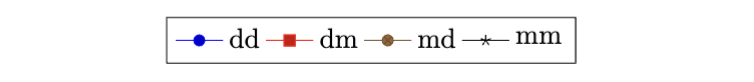}
    \end{minipage}
    \caption{Iteration number relative to the solving time}
    \label{fig:it_t}
\end{figure} 

A key observation is that the choice of preconditioner significantly influences computational efficiency.  
While some preconditioners exhibit robust performance across all discretizations, others show sensitivity to specific settings. 
In the following, we analyze these results.  

\textbf{Element 1 with $ \Lambda = \Lambda_2 $}  

For this case, where the Lagrange multiplier term is evaluated using the $ H^1 $ scalar product, preconditioners $ P_1 $ and $ P_2 $ 	demonstrate significant improvement in reducing the final condition number.  
Moreover, the final condition number remains stable and does not grow with mesh refinement.  
Our findings align with those reported in \cite{boffi_parallel_2024}, where the diagonal preconditioner $ P_3 $ with ``dd'' was found to be less effective in reducing the matrix condition number. However, our application of multigrid techniques—whether across the entire system or selectively to specific blocks (namely ``dm'', ``md'', and ``mm'')—substantially enhances the condition number, demonstrating considerable improvement.  
Additionally, we observe that in these cases, the iteration count versus solving time reaches a constant level under mesh refinement, which is a highly desirable outcome and an important improvement to existing results.  

\textbf{Element 1 with $ \Lambda = \Lambda_1 $}.
For this case, where the Lagrange multiplier term is evaluated using the $ L^2 $ scalar product, similar results are observed.  
It is evident that the condition number of the matrix in the initial case grows significantly faster compared to the case of $ \Lambda_2 $.  
However, all preconditioners --- particularly $ P_1 $ --- achieve a substantial reduction in the condition number.  
Preconditioner $ P_2 $ is also effective, although some oscillations are observed.  
In this instance, while the diagonal preconditioner $P_3$ with ``dd'' does not significantly reduce the condition number, this challenge is effectively addressed when using other preconditioners that incorporate multigrid techniques. 
These alternatives significantly enhance the condition number, leading to much more favorable outcomes. 
On the other hand, the iteration count increases slowly for all $P_3$ with multigrid involvement.

\textbf{Element 2 with $ \Lambda = \Lambda_1 $}.
For this case, where the Lagrange multiplier term is evaluated using the $ L^2 $ scalar product, it is evident that the Vanka preconditioner is not effective, highlighting the need for further improvements in the multigrid approach applied to the block $ B $.  

Nevertheless, the preconditioners ``dd'' and ``md'' in $P_1$ and $P_2$ successfully improve the condition number, making it independent of the mesh size.  
The diagonal preconditioner also shows improvement in the case of ``md''.  
Although the condition number does not decrease in the case of ``dd'', the iteration count stabilizes relative to the solving time, indicating consistent computational performance.  

These observations suggest that applying multigrid to $A$ and/or to $B$ is a promising preconditioning strategy.  
Further improvements could be achieved by optimizing the multigrid parameters and the number of smoothing cycles.

Comparing across different preconditioners, in all cases, the block upper triangular preconditioner $P_1$ and the block lower triangular preconditioner $P_2$ perform similarly and outperform the block diagonal preconditioner $P_3$. 
Moreover, ``md'' seems to work effectively for all test cases and preconditioners tested.

To further improve the proposed preconditioners, a theoretical investigation into convergence rates and condition number reduction would be beneficial.  
More advanced preconditioners could be developed based on the full $3 \times 3$ block structure of the system matrix $A$ , rather than treating it as a $2 \times 2$ block.  
It is important to note that any attempt to construct a preconditioner based on the inverse of the $A_2$ block is unlikely to succeed, as $A_2$ is singular due to the lack of boundary conditions on $\Omega_2$.  

\noindent
\textbf{Acknowledgment}: DB is member of INdAM Research group GNCS.

\bibliographystyle{splncs04}
\bibliography{ref}

\end{document}